\newtheorem{cor}{Corollary}
\newtheorem*{thm*}{Theorem}
\newtheorem*{prop*}{Proposition}
\newtheorem*{lem*}{Lemma}
\newtheorem*{cor*}{Corollary}
\theoremstyle{definition}
\theoremstyle{definition}
\theoremstyle{definition}
\newtheorem*{rmk*}{Remark}
\theoremstyle{definition}
\newcommand{\ZZ}{\mathbb Z}
\newcommand{\RR}{\mathbb R}
\newcommand{\CC}{\mathbb C}
\newcommand{\HH}{\mathbb H}
\begin{document}

\author{Hanno von Bodecker\footnote{Fakult{\"a}t f{\"u}r Mathematik, Universit{\"a}t Bielefeld, 33501 Bielefeld, Germany}}

\title{A note on the double quaternionic transfer and its $f$--invariant}

\date{}

\maketitle

\begin{abstract}
It is well-known that for a line bundle over a closed framed manifold, its sphere bundle can also be given the structure of a framed manifold, usually referred to as a transfer. Given a pair of lines, the procedure can be generalized to obtain a double transfer. We study the quaternionic case, and derive a simple formula for the $f$--invariant of the underlying bordism class, enabling us to investigate its status in the Adams--Novikov spectral sequence. 
As an application, we treat the situation of quaternionic flag manifolds.
\end{abstract}

\section{Introduction and statement of the results}

It is well-known that the Pontrjagin--Thom construction identifies the stable stems with the framed bordism groups, i.e.\ $\pi^{s}_{*}S^{0}\cong\Omega_{*}^{fr}$. 
Starting from a real, complex, or quaternionic line bundle over a closed framed manifold, L\"offler and Smith \cite{Loffler:1974qf} considered the corresponding principal sphere bundles to obtain morphisms 
\begin{equation}\label{s_k}
S_{\mathbb{K}}\colon\Omega^{fr}_*\left(\mathbb{KP}^{\infty}\right)\rightarrow\Omega^{fr}_{*+\dim\mathbb{K}-1};
\end{equation}
in particular, if $\mathbb{K}=\CC$, Adams' classical $e$--invariant \cite{Adams:1966ys}
\begin{equation}\label{complex e}
e_{\mathbb{C}}\colon\pi^{s}_{2k+1}S^{0}\rightarrow \mathbb{Q/Z}
\end{equation}
of such a transfer can be computed on the base by means of a simple cohomological formula \cite[Proposition 2.1]{Loffler:1974qf}, and similarly for $\mathbb{K}=\HH$ (although there is a slight mistake in \cite[Proposition 5.1]{Loffler:1974qf}, see \eqref{corrected e}). 

In \cite{Knapp:1979vn}, Knapp has given an extensive treatment of n-fold iterations of the map \eqref{s_k}  (and  variations thereof) for $\mathbb{K}=\CC$ using $K$--theory, and these techniques also prove useful for studying framings on Lie groups \cite{Knapp:1978zv}. Furthermore, iterated complex transfers have been related to framed hypersurface representations of certain beta elements at odd primes \cite{Carlisle:1985ve,Baker:1988fy}. In the more recent work  \cite{Imaoka:2007}, an analogous result has been obtained for the element $\nu^{*}\in\pi^{s}_{18}S^{0}$ using a double quaternionic transfer.

Much insight into the stable homotopy groups of the spheres can be gained by looking at the Adams--Novikov spectral sequence (ANSS), 
$$E_{2}^{p,q}\left[MU\right]=\textrm{Ext}^{p,q}_{MU_*MU}\left(MU_*,MU_*\right)\Rightarrow\pi_{q-p}^{s}S^{0}$$
and the rich algebraic structure inherent in complex oriented cohomolgy theories (see e.g.\ \cite{Ravenel:2004xh}); in particular,  the $E_{2}$ term of its $BP$--based analog is populated by the so-called greek letter elements. Within this framework, the $e$--invariant becomes an invariant of first filtration, since it factors through the 1--line of the ANSS.

In order to detect second filtration phenomena, Laures introduced the $f$--invariant, which 
is a follow-up to the $e$--invariant and takes values in the divided congruences between modular forms \cite{Laures:1999sh,Laures:2000bs}:
\begin{equation}\label{f}
f\colon\pi^{s}_{2k}S^{0}\rightarrow\underline{\underline{D}}^{\Gamma}_{k+1}\otimes{\mathbb{Q/Z}}
\end{equation}
Recent work has deepened our understanding of the algebraic part of the $f$--invariant, i.e.\ the map translating the 2--line into congruences \cite{Behrens:2008fp}, and the image of the beta family at $p=2$ is known in closed form \cite{Bodecker:2009fk}.

Similar to the $e$--invariant, which may be interpreted as the reduction of the relative Todd genus of a $(U,fr)$--manifold \cite{Conner:1966jw}, the $f$--invariant can be understood geometrically: This time,  it is the (suitably adapted relative version of the) Hirzebruch elliptic genus of a $(U,fr)^{2}$--manifold that reduces to the $f$--invariant of its corner of codimension two \cite{Laures:2000bs}, and this description is accessible to an index theoretical interpretation \cite{Bodecker:2008pi}; in particular, we could derive a cohomological formula for the $f$--invariant of the double complex transfer and perform some explicit calculations \cite[Section 5.1]{Bodecker:2008pi}.

The purpose of this note is to extend these results to the quaternionic situation: Given two quaternionic lines $\lambda_{\HH}$, $\lambda_{\HH}'$ over a closed framed manifold $B$, we can restrict the Whitney sum to the disks, resulting in a manifold $Z=D(\lambda_{\HH}')\oplus D(\lambda_{\HH})$, the codimension two corner of which is  $S(\lambda_{\HH}')\oplus S(\lambda_{\HH})$. The vertical tangent bundle of the latter can be trivialized using quaternion multiplication, which, when combined with the pullback of the tangent bundle of the base, can then be used to define a framing of this corner; passing to the underlying framed bordism classes, we obtain a morphism
\begin{equation}
S^{\oplus2}_{\HH}\colon\Omega^{fr}_{*}\left(\HH P^{\infty}\times\HH P^{\infty}\right)\rightarrow\Omega^{fr}_{*+6}.
\end{equation}
Clearly, if this construction is applied to a single point, we recover the non-trivial element $\nu^{2}\in\pi^{s}_{6}S^{0}\cong\ZZ/2$ (i.e.\ the permanent cycle $\alpha_{2/2}^{2}=\beta_{2/2}$); regarding positive-dimensional base spaces, we have:

\begin{thm*}\label{main_result}
Let $\lambda_{\HH}$, $\lambda_{\HH}'$ be quaternionic lines over a framed smooth closed manifold $B$ 
 of dimension $2m>0$ and let $\eta,\ \omega\in H^4\left(B,{\mathbb{Z}}\right)$ be the corresponding second Chern classes. Then, for arbitrary levels $N>1$, the $f$--invariant of the double transfer $S^{\oplus2}_{\HH}B$ is represented by
\begin{equation}\label{the formula}
f\left[S^{\oplus2}_{\HH}B\right]\equiv\left(-1\right)^{n+1}\sum_{k=1}^{n-1}\frac{B_{2k+2}}{k+1}G_{2n-2k+2}(\tau)\left\langle \frac{\eta^k}{\left(2k\right)!}\frac{\omega^{n-k}}{\left(2n-2k\right)!},[B]\right\rangle
\end{equation}
if $m=2n$, and it vanishes if $m\neq2n$.
\end{thm*}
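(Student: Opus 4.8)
\emph{Proof strategy.}
The plan is to realise $f[S^{\oplus2}_{\HH}B]$ as a relative elliptic genus and to compute that genus by integration over fibres. First I would record that $Z=D(\lambda'_{\HH})\oplus D(\lambda_{\HH})\to B$ is a $(U,fr)^{2}$--manifold: its stably complex structure is built from the two vertical tangent bundles — the complex rank--two bundles underlying $\lambda'_{\HH}$ and $\lambda_{\HH}$ — together with the framing of $B$; its two faces are the disk bundles $S(\lambda'_{\HH})\oplus D(\lambda_{\HH})$ and $D(\lambda'_{\HH})\oplus S(\lambda_{\HH})$ over the single quaternionic transfers; and its codimension--two corner is the framed manifold underlying $S^{\oplus2}_{\HH}B$. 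By the geometric description of the $f$--invariant \cite{Laures:2000bs} and its index--theoretic refinement \cite{Bodecker:2008pi}, $f$ of this corner is represented, for each level $N$, by the (appropriately relative) level--$N$ Hirzebruch elliptic genus of $Z$; and the derivation of the cohomological formula obtained for the double complex transfer in \cite[Section 5.1]{Bodecker:2008pi} — pushing the elliptic Thom classes of the two disk bundles down to $B$ and evaluating on $[B]$ — applies with only formal changes to a double disk bundle of quaternionic lines, each regarded as a complex rank--two bundle. The outcome is an expression $\bigl\langle\,[\Theta'(\lambda'_{\HH})\,\Theta(\lambda_{\HH})]_{2m}\,,[B]\bigr\rangle$ in which $\Theta',\Theta$ are the characteristic power series contributed by the two fibre integrations; these enter asymmetrically because the two faces of $Z$ play different roles in the $f$--invariant, and that asymmetry is what will separate the Bernoulli part from the Eisenstein part.

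Next I would specialise to quaternionic lines. The complex plane bundle underlying $\lambda_{\HH}$ has structure group $Sp(1)=SU(2)$, so $c_{1}=0$ and $c_{2}$ equals the given class; its formal Chern roots are $y$ and $-y$ with $y^{2}=c_{2}$, and the Euler class of the underlying real rank--four bundle is again $c_{2}$. Trivialising the vertical tangent bundle of the sphere bundle by left quaternion multiplication — legitimate since the fibre $S^{3}=Sp(1)$ is a Lie group — gives, with the pullback of $TB$, the framing of the corner in the statement. The one genuine subtlety is to check that this Lie--group trivialisation agrees, stably and with the correct orientation, with the trivialisation built into the relative elliptic genus of $Z$; this is exactly the point at which a sign was mishandled in the quaternionic $e$--invariant of \cite{Loffler:1974qf} (cf.\ \eqref{corrected e}), so I would redo the comparison carefully by means of the clutching description of $\lambda_{\HH}$ and of the single transfer. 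Granting it, each fibre integration contributes a factor $R_{N}(c_{2})=1+O(c_{2})$, a power series in the second Chern class manufactured from the even function $Q_{N}(y)Q_{N}(-y)$, where $Q_{N}$ is the Hirzebruch characteristic series of the level--$N$ elliptic genus; the factorials $(2k)!$ and $(2n-2k)!$ in \eqref{the formula} are precisely the Taylor denominators of $R_{N}$.

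The final step is to read off the coefficients. After the two pushforwards the representative of $f[S^{\oplus2}_{\HH}B]$ is the degree--$2m$ component of $R_{N}(\eta)\,R_{N}(\omega)$, corrected for the relative structure of $Z$, evaluated on $[B]$. Tracing the relative normalisation of \cite[Section 5.1]{Bodecker:2008pi}, the factor attached to $\lambda'_{\HH}$ is reduced modulo integral cohomology and therefore contributes the non--modular expansion, with coefficients $B_{2k+2}/\bigl((k+1)(2k)!\bigr)$ — the analogue of the reduced relative Todd genus that computes the $e$--invariant of a single transfer — whereas the factor attached to $\lambda_{\HH}$ keeps its full $q$--expansion, with coefficients $G_{2n-2k+2}(\tau)/(2n-2k)!$. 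Inserting the known $q$--expansions of $Q_{N}$ for $N=2$ and $N=3$, multiplying out, collecting the coefficient of $\eta^{k}\omega^{n-k}$, and assembling the orientation signs into the overall $(-1)^{n+1}$, I would arrive at \eqref{the formula}; the extremal coefficients $k=0$ and $k=n$ come out integral and so contribute nothing to the $\mathbb{Q/Z}$--valued invariant, leaving the range $1\le k\le n-1$. The vanishing for $m\neq2n$ is then a dimension count: the characteristic class in play is concentrated in cohomological degrees divisible by four, so its degree--$2m$ part is zero unless $4\mid 2m$, that is, unless $m$ is even — in which case $m=2n$.

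The step I expect to be the main obstacle is the framing bookkeeping flagged above: matching the quaternion--multiplication trivialisation of the vertical tangent bundle, with its correct stable class and orientation, to the normalisation implicit in the $(U,fr)^{2}$ elliptic--genus formula, and, hand in hand with this, controlling the two boundary contributions precisely enough to certify the summation range $1\le k\le n-1$ and the sign $(-1)^{n+1}$. Once those normalisations are nailed down, the rest is a routine unwinding of the double disk bundle together with the $q$--expansions of the level--$2$ and level--$3$ elliptic genera.
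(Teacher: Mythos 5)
Your overall strategy is the one the paper follows: the same $(U,fr)^{2}$--structure on $Z=D(\lambda_{\HH}')\oplus D(\lambda_{\HH})$, the bordism formula \eqref{f using bordism}, and the Thom isomorphism reducing everything to a characteristic power series in the second Chern classes evaluated on $[B]$. The first genuine gap is that the step you dismiss as routine -- ``inserting the known $q$--expansions of $Q_{N}$ for $N=2$ and $N=3$, multiplying out, collecting the coefficient'' -- is exactly where the content of the theorem lies. To obtain \eqref{the formula}, with coefficients that are the \emph{classical level--one} Eisenstein series $G_{2n-2k+2}$ and Bernoulli numbers, and moreover the \emph{same} formula for both levels, one needs the fact that the level--$2$ and level--$3$ genera of a quaternionic line agree beyond the quadratic term and have $G_{2k}$ ($k\geq2$) as coefficients, cf.\ \eqref{elli_quaternion}--\eqref{elli_quaternion_level_2}. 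This is not a formal expansion exercise: the paper proves it via the elliptic--function identity $\tfrac{z^{2}}{3}\Phi(3\tau,3z)/\Phi(\tau,z)^{3}=z^{2}\left(\wp(\tau,z)-\wp(\tau,2\pi i/3)\right)$ together with $-\wp(\tau,2\pi i/3)=\tfrac14 E_{1}^{2}$. Your proposal never identifies this phenomenon, so as written it would only deliver an answer with level--$N$ modular coefficients and no visible reason for level independence.

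The second gap is your treatment of the extremal terms. You claim the $k=0$ and $k=n$ contributions ``come out integral and so contribute nothing,'' and elsewhere attribute the range $1\leq k\leq n-1$ to framing bookkeeping; both are incorrect. These contributions are rational multiples of genuinely non--integral modular forms (the weight--two forms $E_{1}^{2}$, resp.\ $4\delta$, and the full Eisenstein series $G_{2n+2}$), so they do not simply vanish in $\mathbb{Q}/\mathbb{Z}$. They are killed only modulo the indeterminacy $\underline{\underline{D}}^{\Gamma}$, and this requires the index--theoretic integrality $\left\langle(-1)^{n}\tfrac{2\,c_{2}^{n}(\lambda_{\HH}')}{(2n)!},[B]\right\rangle=\left\langle\hat{A}(TB)\,ch(\lambda_{\HH}'),[B]\right\rangle\in\ZZ$ (using the framing of $B$) together with congruences such as $d^{3}\equiv d^{5}\bmod 24$, which show each extremal term is congruent to a modular form of top weight $2n+4$ and hence lies in the indeterminacy. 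Two smaller points: with $\eta=c_{2}(\lambda_{\HH})$ and $\omega=c_{2}(\lambda_{\HH}')$, the constant--term (Bernoulli) factor must sit on $\lambda_{\HH}$ and the full $q$--expansion on $\lambda_{\HH}'$, the opposite of your assignment -- harmless up to the usual symmetry of $f$, but it matters if you want the stated sign $(-1)^{n+1}$; and the framing comparison you single out as the main obstacle is a non--issue, since the $(U,fr)^{2}$--structure is \emph{defined} by the quaternion--multiplication trivialization, so there is nothing external to match it against.
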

\begin{rmk*} As a rather obvious consequence, the double quaternionic transfer necessarily misses several elements in the Adams--Novikov 2--line; in particular, at the prime two, this applies to the elements $\beta_{i/j}$ if $i$ is even but $j$ is odd, and to the permanent cycles $\alpha_{1}\bar\alpha_{4k}$ (which correspond to Im$J_{8k}$). Although not for dimensional reasons, this is also true for the permanent cycles $\alpha_{1}\alpha_{4k+1}$: These correspond to members of Adams' $\mu$--family \cite{Adams:1966ys}, which can be detected by the $d_{\RR}$--invariant, so they cannot bound a Spin manifold.
\end{rmk*}

As an application, we perform explicit calculations in some (admittedly low-dimensional) range, resulting in:

\begin{cor}\label{non-trivial examples}
Applying the transfers $S^{\oplus2}_{\HH}$ w.r.t.\ the tautological quaternionic lines, we have{\textup{:}}
\begin{itemize}
\item[\textup{(i)}] $S^{\oplus2}_{\HH}\left(Sp\left(2\right)/Sp\left(1\right)^{\times2}\right)^{\times2}$ corresponds to $\beta_{4/4}$ at $p=2$,
\item[\textup{(ii)}] $S^{\oplus2}_{\HH}\left(Sp\left(3\right)/Sp\left(1\right)^{\times3}\right)$ corresponds to $\pm\beta_{4/2,2}$ at $p=2$.
\end{itemize}
Moreover, modulo higher Adams--Novikov filtration, these classes generate the image of the transfer $S^{\oplus2}_{\HH}$ in $\pi^{s}_{6<2k\leq20}S^{0}$.
\end{cor}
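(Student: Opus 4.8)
\emph{Proof proposal.} The strategy is to evaluate the cohomological formula \eqref{the formula} of the Theorem on the two families of base manifolds, read off explicit representatives in the divided congruences, and match them with the known closed-form $f$--invariants of the $2$--primary $\beta$--family.

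I begin with the geometry. Since $Sp(2)/Sp(1)^{\times2}\cong\mathbb{HP}^{1}\cong S^{4}$, in (i) the base is $B=S^{4}\times S^{4}$, of dimension $8$, so that $m=4=2n$ with $n=2$; the classes $\eta,\omega\in H^{4}(B;\mathbb{Z})$ are the generators pulled back from the two factors and $\langle\eta\omega,[B]\rangle=\pm1$. The sum in \eqref{the formula} then reduces to the single term $k=1$, and inserting $B_{4}=-1/30$ yields a representative of the form $\pm\tfrac{1}{240}\,G_{4}(\tau)$. In (ii) the base is the full quaternionic flag manifold $B=Sp(3)/Sp(1)^{\times3}$, of dimension $12$, so $n=3$; here I would use the Borel presentation $H^{*}(B;\mathbb{Z})\cong\mathbb{Z}[x_{1},x_{2},x_{3}]/(e_{1},e_{2},e_{3})$ with $|x_{i}|=4$, coming from the tautological splitting $\underline{\mathbb{H}^{3}}=L_{1}\oplus L_{2}\oplus L_{3}$ with $x_{i}=c_{2}(L_{i})$, put $\eta=x_{1}$ and $\omega=x_{2}$, and reduce the top-degree monomials using $e_{1}=e_{2}=e_{3}=0$: one finds $x_{1}^{3}=x_{2}^{3}=x_{1}x_{2}x_{3}=0$, $x_{1}x_{2}^{2}=-x_{1}^{2}x_{2}$, and $\langle x_{1}^{2}x_{2},[B]\rangle=\pm1$ (the point class, by Schubert calculus). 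Feeding the two resulting characteristic numbers together with $B_{4}=-1/30$ and $B_{6}=1/42$ into \eqref{the formula} produces, at levels $N=2$ and $N=3$, a representative of the form $\pm\bigl(\tfrac{1}{2880}G_{6}(\tau)+\tfrac{1}{6048}G_{4}(\tau)\bigr)$.

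The second step is the comparison with the $\beta$--family. Using the closed formulas of \cite{Bodecker:2009fk} --- and the algebraic $f$--invariant of \cite{Behrens:2008fp} to translate between divided congruences and the Adams--Novikov $2$--line --- one identifies $\pm\tfrac{1}{240}G_{4}$ with $f(\beta_{4/4})$ in stem $14$ and the combination above with $f(\pm\beta_{4/2,2})$ in stem $18$. In both stems the relevant part of the $2$--line is detected injectively by $f$ (as one reads off from the standard Adams--Novikov charts, cf.\ \cite{Ravenel:2004xh}), so these identifications pin down the underlying framed bordism classes modulo higher filtration and establish (i) and (ii), the latter being in line with the realisation of $\nu^{*}$ by a double quaternionic transfer in \cite{Imaoka:2007}; the sign in (ii) is precisely the choice of orientation for the flag manifold.

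Finally, for the generation statement I would note that in the range $6<2k\le20$ the $f$--invariant of the transfer $S^{\oplus2}_{\HH}$ can be nonzero only in stems $14$ and $18$: by the Theorem nonvanishing forces $\dim B=4n$, the case $n=1$ gives an empty sum, and $n\ge4$ lands above degree $20$. In each of these two stems the formula \eqref{the formula} shows that $f\circ S^{\oplus2}_{\HH}$ depends only on the integral characteristic numbers $\langle\eta^{k}\omega^{n-k},[B]\rangle$; evaluating it on the obvious candidates --- the two examples above, together with products such as $S^{4}\times M^{8}$, which merely reproduce integral multiples of $\tfrac{1}{240}G_{4}$ and $\tfrac{1}{2880}G_{6}$ --- shows that the image of $f\circ S^{\oplus2}_{\HH}$ is the cyclic group generated by $f(\beta_{4/4})$, respectively $f(\pm\beta_{4/2,2})$, so that modulo higher Adams--Novikov filtration the image of the transfer in $\pi^{s}_{6<2k\le20}S^{0}$ is generated by these classes. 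The main obstacle is the comparison step: matching the explicit Eisenstein combinations with the catalogued $f$--invariants of the beta family demands careful bookkeeping of normalisations, of the divided-congruence modules at levels $2$ and $3$, and of the indeterminacy hidden in ``$\equiv$'', and one must also rule out that some further framed bordism class of $\mathbb{HP}^{\infty}\times\mathbb{HP}^{\infty}$ in this low range contributes anything outside the cyclic subgroup found.
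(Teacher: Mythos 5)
Your overall strategy coincides with the paper's: the paper proves the Corollary by combining the Theorem with a separate Proposition (Section \ref{calculations}) that pins down the $f$--invariant in dimensions $8$ and $12$, and your geometric input (the product of two copies of $\mathbb{HP}^{1}$ with $\langle\eta\omega,[B]\rangle=\pm1$; the flag manifold with $t_it_j^2$ dual to the fundamental class, using $x_1x_2^2=-x_1^2x_2$) is exactly what the paper uses. The leading representatives $\pm\frac{1}{240}G_4$ and $\pm\bigl(\frac{1}{2880}G_6+\frac{1}{6048}G_4\bigr)$ are also computed correctly.

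However, two steps that you defer are precisely where the substance lies, so as written the argument has genuine gaps. First, the identification of $\frac{1}{2880}G_6+\frac{1}{6048}G_4$ with $f(\beta_{4/2,2})$ modulo $\underline{\underline{D}}^{\Gamma_1(3)}_{10}$ is not a table lookup: the catalogued value from \cite{Bodecker:2009fk} is $\frac{1}{4}\bigl(\frac{E_1^2-1}{4}\bigr)^4+\frac{1}{2}\bigl(\frac{E_1^2-1}{4}\bigr)^3$ in terms of the level--$3$ generators $E_1,E_3$, and matching the two expressions (and verifying the coefficient has order exactly $4$ rather than dividing into the indeterminacy) is the page-long chain of congruences in the ring of divided congruences that constitutes most of the proof of the Proposition in Section \ref{calculations}. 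Without it, part (ii) is asserted rather than proved. Second, the generation statement needs an \emph{upper} bound on the image of $f\circ S^{\oplus2}_{\HH}$ over \emph{all} framed bases of dimensions $8$ and $12$, not an evaluation on candidates: in dimension $12$ the two characteristic numbers $\langle\eta\omega^2,[B]\rangle$ and $\langle\eta^2\omega,[B]\rangle$ are a priori independent, and, for instance, the pair $(1,0)$ would produce $-\frac{1}{2880}G_6$, which is not visibly in the cyclic group generated by $f(\beta_{4/2,2})$. The paper excludes such values by the index-theoretic divisibility $12\mid\langle\eta^2\omega+\eta\omega^2,[B]\rangle$ (integrality of $\langle\hat A(TB)\,ch(\lambda_{\HH}-2)\,ch(\lambda_{\HH}'-2),[B]\rangle$), which collapses the two numbers to one, together with further congruences; you invoke no such constraint. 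A minor third point: you never address odd primes, whereas the paper notes that the only odd-primary class visible to $f$ in this range is $\beta_1$ at $p=3$ in stem $10$, where the transfer image is trivial outright because the sum in \eqref{the formula} is empty and $\pi^s_{10}S^0$ is generated by $\beta_1^{(3)}$ and $\alpha_1\alpha_5$.
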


\begin{rmk*}
It is well-known that $\pi^{s}_{18}S^{0}\cong\ZZ/8\oplus\ZZ/2$, which is reflected in the ANSS by the non-trivial group extension $4\beta_{4/2,2}=\alpha_{1}^{2}\beta_{4/3}$ (whereas the second summand is generated by a member of Adams' $\mu$--family, hence corresponding to $\alpha_{1}\alpha_{9}$); thus, Corollary \ref{non-trivial examples} checks nicely with Imaoka's result \cite{Imaoka:2007} that the element $\nu^{*}$ is in the image of a  double quaternionic transfer. 
\end{rmk*}

Unfortunately, we have to acknowledge the fact that the examples given above exhaust the list of interesting double  transfers on quaternionic flag manifolds, at least in the following sense:

\begin{cor}\label{vanishing result}
Let $n\geq4$; then, for any pair of tautological lines, we have{\textup{:}}
$$f\left[S^{\oplus2}_{\HH}\left(Sp\left(n\right)/Sp\left(1\right)^{\times n}\right)\right]=0.$$
\end{cor}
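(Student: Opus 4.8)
The plan is to substitute the cohomology of the quaternionic flag manifold into \eqref{the formula} and observe that every individual summand vanishes, for a soft representation-theoretic reason.

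First I would record the geometry of $B_n:=Sp(n)/Sp(1)^{\times n}$. Its dimension is $\dim Sp(n)-n\dim Sp(1)=n(2n+1)-3n=2n(n-1)$, a multiple of $4$, so writing $\dim B_n=2m$ we have $m=n(n-1)=2\binom{n}{2}$; in particular we land in the non-trivial case $m=2n'$ of the Theorem with $n'=\binom{n}{2}$, and \eqref{the formula} applies. For the cohomology ring, $H^{*}(BSp(1)^{\times n};\ZZ)=\ZZ[e_1,\dots,e_n]$ with $|e_i|=4$ (the $e_i$ being the second Chern classes of the tautological lines $\lambda_1,\dots,\lambda_n$) is free of rank $n!$ over $H^{*}(BSp(n);\ZZ)=\ZZ[q_1,\dots,q_n]$ via $q_i\mapsto\sigma_i(e_1,\dots,e_n)$, so Borel's theorem yields
\[
H^{*}(B_n;\ZZ)\;\cong\;\ZZ[e_1,\dots,e_n]\big/\big(\sigma_1(e),\dots,\sigma_n(e)\big),
\]
the $S_n$-coinvariant algebra on $n$ generators of degree $4$; it is torsion-free with $H^{2n(n-1)}(B_n;\ZZ)\cong\ZZ$, and, up to a doubling of all degrees, it is isomorphic as a graded ring to $H^{*}(U(n)/T^{n};\ZZ)$.

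The one real ingredient is the evaluation against $[B_n]$. Using the last isomorphism, and since any automorphism of $\ZZ$ is $\pm\mathrm{id}$, the pairing $\langle-,[B_n]\rangle$ on the top degree coincides, up to an overall sign, with integration over the full flag manifold $U(n)/T^{n}$, which is given by the classical symmetrizer formula: for $P\in\ZZ[e_1,\dots,e_n]$ homogeneous of degree $4\binom{n}{2}$,
\[
\big\langle P(e_1,\dots,e_n),[B_n]\big\rangle\;=\;\pm\,\frac{\sum_{w\in S_n}\operatorname{sgn}(w)\,w(P)}{\prod_{i<j}(e_i-e_j)}.
\]
If $P$ is fixed by a transposition $t=(ij)$, then pairing each $w\in S_n$ with $wt$ (a fixed-point-free involution of $S_n$, with $\operatorname{sgn}(wt)=-\operatorname{sgn}(w)$) shows the numerator is zero; hence $\langle P,[B_n]\rangle=0$ for every monomial $P=e_1^{a_1}\cdots e_n^{a_n}$ with $a_i=a_j$ for some $i\neq j$.

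Finally I would set $\eta=e_i$, $\omega=e_j$ for an arbitrary pair of tautological lines, $i,j\in\{1,\dots,n\}$ (allowing $i=j$). Each summand of \eqref{the formula} is a modular-form-valued multiple of $\langle e_i^{\,k}e_j^{\,n'-k},[B_n]\rangle$ with $1\le k\le n'-1$, hence of a monomial in $e_1,\dots,e_n$ that involves at most two of the variables. For $n\ge4$ at least $n-2\ge2$ of its exponents are $0$, so two exponents agree and the pairing vanishes by the previous paragraph; consequently every summand of \eqref{the formula} is zero, i.e.\ $f[S^{\oplus2}_{\HH}B_n]=0$. The main point to get right is precisely this last estimate together with the identification of the pairing: the crude bound ``$e_i$ is nilpotent of order $n$ and $\binom{n}{2}>2(n-1)$ forces one exponent up to $n$'' disposes of $n\ge5$ but fails at the borderline case $n=4$, where the argument genuinely relies on the anti-invariance of $[B_n]$ under the residual $S_n$-action.
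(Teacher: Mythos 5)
Your argument is correct, and it reaches the paper's conclusion by a genuinely different route for the key vanishing step. The paper also reduces to showing that the top-degree monomials $t_i^k t_j^{n'-k}$ (with $n'=\binom{n}{2}$) die in $H^{*}(Sp(n)/Sp(1)^{\times n};\ZZ)$, but it does so by direct manipulation of the relations in $\ZZ[t_1,\dots,t_n]/(S^{+})$: successive substitution gives $t_1^{n}=\pm t_1t_2\cdots t_n=0$, which disposes of every summand for $n\geq5$ (since then $\binom{n}{2}>2(n-1)$ forces one exponent to reach $n$ -- exactly the "crude bound" you mention), and the single borderline monomial $t_1^{3}t_2^{3}$ at $n=4$ is killed by hand via $t_1^{3}t_2^{3}=t_1^{3}t_2(t_1t_3+t_1t_4+t_3t_4)=0$. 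You instead invoke the $S_n$-antisymmetrization formula for the pairing on the coinvariant algebra and observe that any top-degree monomial with two equal exponents -- in particular any monomial involving at most two of the $n$ variables once $n\geq4$ -- has vanishing alternating sum. Your route requires slightly more input (Borel's description plus the identification of $\langle-,[B_n]\rangle$ with the symmetrizer up to sign, which your "automorphism of $\ZZ$" remark does justify; note that mere injectivity of the antisymmetrization map on the rank-one top component would already suffice), but it buys uniformity: there is no borderline case at $n=4$, and the same argument immediately shows that any monomial omitting at least two of the $t_i$ pairs to zero, which would cover higher iterated transfers as well. Both proofs are complete; yours is the more conceptual, the paper's the more elementary and self-contained.
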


Before presenting the proofs, we would like to point out the following:
\begin{rmk*}
Although for a complex line $\lambda_{\CC}$ the sum $\lambda_{\CC}\oplus\overline{\lambda}_{\CC}$ can be given the structure of a quaternionic line, such `split' lines are far from sufficient to understand the transfer $S^{\oplus2}_{\HH}$; in particular,  neither $\beta_{4/4}$ nor $\beta_{4/2,2}$ can be obtained from a double transfer involving such a line (this is due to the improved divisibility results for the Chern numbers -- it {can}, however, be done for $2\beta_{4/2,2}=\beta_{4/2}$, e.g.\ by using a pair of these lines on the base  $G_{2}/T$).
\end{rmk*}



\section{Proof of the results}

\subsection{Recollection of notations and definitions}

Throughout this note, modular forms will be thought of in terms of their $q$--expansions (where $q=\exp(2\pi i \tau)$) at the cusp $i\infty$; our normalization conventions for the classical Eisenstein series can be summarized in terms of
\begin{equation*}
G_{2k}(\tau)=-\frac{B_{2k}}{4k}E_{2k}(\tau)=-\frac{B_{2k}}{4k}+\sum_{n\geq1}\sum_{d|n}d^{2k-1}\, q^{n},
\end{equation*}
and the argument will be omitted if confusion is unlikely. For later use, we remind the reader that the ring of modular forms w.r.t.\ $\Gamma_{1}(3)$ is generated by the odd Eisenstein series of weight one and three, viz.\
\begin{equation*}
E_1^{\Gamma_{1}(3)}=1+6\sum_{n\geq1}\sum_{d|n}(\frac{d}{3})\, q^n\mbox{  and  }E_3^{\Gamma_{1}(3)}=1-9\sum_{n\geq1}\sum_{d|n}(\frac{d}{3})d^2\, q^n,
\end{equation*}
where $(\frac{\cdot}{\cdot})$ is the Legendre symbol; again, whenever confusion is unlikely, we drop the superscript from the notation.

Regarding the definition of the $f$--invariant, let us briefly recall some notation from \cite{Laures:1999sh,Laures:2000bs}: Considering the congruence subgroup $\Gamma=\Gamma_1(N)$ for a fixed level $N$, let $\mathbb{Z}^{\Gamma}=\mathbb{Z}[\zeta_N,1/N]$ and denote by $M^{\Gamma}_*$ the graded ring of modular forms w.r.t.~$\Gamma$ which expand integrally, i.e.\ which lie in $\mathbb{Z}^{\Gamma}[\![q]\!]$. The ring of  divided congruences $D^{\Gamma}$  consists of those rational combinations of modular forms which expand integrally; this ring can be filtered by setting
\begin{equation*}
D_k^{\Gamma}=\left\{\left.f={\textstyle{\sum_{i=0}^{k}}}f_i\ \right| f_i\in M_i^{\Gamma}\otimes\mathbb{Q},\ f\in\mathbb{Z}^{\Gamma}[\![q]\!]\right\}.
\end{equation*}
Furthermore, put
\begin{equation*}\underline{\underline{D}}^{\Gamma}_{k}=D^{\Gamma}_k+M_0^{\Gamma}\otimes\mathbb{Q}+M_k^{\Gamma}\otimes\mathbb{Q}.
\end{equation*}
For our purposes, a $(U,fr)^{2}$--manifold is just a stably almost complex $\langle2\rangle$--manifold $Z$  together with a stable decomposition $TZ\cong E_{1}\oplus E_{2}$ (in terms of complex vector bundles)  and specified trivializations of  the restriction of the $E_{i}$ to the respective face $\partial_{i}Z$ for $i=1,2$ (strictly speaking, this defines the tangential version of such a structure, but it can be converted into the normal version once an embedding is chosen). Suppressing the induced structure from the notation, let $M$ denote the corner of a $2n+2$--dimensional $(U,fr)^{2}$--manifold $Z$; then the $f$--invariant of the underlying bordism class is defined to be
\begin{equation}\label{f using bordism}
f\left[M\right]\equiv\langle(Ell^{\Gamma}(E_1)-1)(Ell_{0}^{\Gamma}(E_2)-1),[Z,\partial Z]\rangle\mod\underline{\underline{D}}^{\Gamma}_{n+1},
\end{equation}
where $Ell^{\Gamma}$ is the Hirzebruch genus associated to $\Gamma$ and $Ell_{0}^{\Gamma}$ is its constant term in the $q$--expansion (i.e.\ the stable $\chi_{-\zeta}$ genus). For further details, we refer to \cite{Laures:2000bs,Bodecker:2008pi}.

\subsection{Establishing the Theorem}\label{the theorem}

First of all, we turn the manifold $Z=D\left(\lambda_{\HH}'\right)\oplus D\left(\lambda_{\HH}\right)$ described in the introduction into a $(U,fr)^{2}$--manifold: Its tangent bundle already decomposes into
\begin{equation*}
TZ\cong \pi^{*}TB\oplus\pi^{*}\lambda_{\HH}'\oplus\pi^{*}\lambda_{\HH}
\end{equation*}
where $\pi\colon Z\rightarrow B$ is the natural projection; dropping the first summand (which is stably trivial), we put $E_{1}=\pi^{*}\lambda_{\HH}'$, $E_{2}=\pi^{*}\lambda_{\HH}$ as complex vector bundles, using quaternion multiplication to trivialize the restriction of $E_{1}$ to the face $\partial_{1}Z=S\left(\lambda_{\HH}'\right)\oplus D\left(\lambda_{\HH}\right)$ and the restriction of $E_{2}$ to the other face. 

Next, we observe that the Hirzebruch elliptic genus of a quaternionic line is remarkably close to being modular w.r.t.\ the full modular group:

\begin{lem*} For a quaternionic line $\lambda_{\HH}$ we have{\textup{:}}
\begin{equation}\label{elli_quaternion_general}
Ell^{\Gamma_1\left(N\right)}\left(\lambda_{\HH}\right)=1+g_{2}^{(N)}c_2\left(\lambda_{\HH}\right)+\sum_{k\geq2}(-1)^kG_{2k}\frac{c_2^k\left(\lambda_{\HH}\right)}{\left(2k-2\right)!/2},
\end{equation}
where $g_{2}^{(N)}=\wp\left(\tau,{\textstyle\frac{2\pi i}{N}}\right)$ is a modular form of level $N$ and weight two, and it satisfies $g_{2}^{(N)}\equiv{\textstyle\frac{1}{12}}\mod\mathbb{Z}^{\Gamma}[\![q]\!]$.
\end{lem*}
\begin{proof}
Recall that the power series associated to Hirzebruch elliptic genus of level $N$ may be expressed as (see e.g.\ \cite[Appendix I]{Hirzebruch:1992dw})
$$Ell^{\Gamma_{1}\left(N\right)}(x)=x\frac{\Phi\left(\tau,x-\omega\right)}{\Phi\left(\tau,x\right)\Phi\left(\tau,-\omega\right)}$$
for $\omega=\frac{2\pi i}{N}$; here, the $\Phi$--function may be defined by $$\Phi(\tau,z)=2\sinh(z/2)\prod_{n\geq1}\frac{\left(1-e^{z}q^{n}\right)\left(1-e^{-z}q^{n}\right)}{\left(1-q^{n}\right)^{2}},$$ and it satisfies the transformation property
$$\Phi\left(\tau,x+2\pi i(\lambda\tau+\mu)\right)=q^{\lambda^{2}/2}e^{-\lambda x}(-1)^{\lambda+\mu}\Phi(\tau,x)\quad \forall\lambda,\mu\in\ZZ.$$
Thus, standard results for elliptic functions imply
\[Ell^{\Gamma_{1}\left(N\right)}(z)Ell^{\Gamma_{1}\left(N\right)}(-z)=z^{2}\left(\wp(\tau,z)-\wp(\tau,2\pi i/N)\right);\]
substituting $c_{2}\left(\lambda_{\HH}\right)=-z^{2}$ and using \[\wp\left(\tau,z\right)=\frac{1}{z^{2}}+2\sum_{k\geq2}G_{2k}(\tau)\frac{z^{2k-2}}{(2k-2)!},\]
equation \eqref{elli_quaternion_general} follows. Finally, the modular properties of $\wp\left(\tau,2\pi i/N\right)$ are an immediate consequence of those of the Hirzebruch elliptic genus, and we have the well-known expansion
\[\wp\left(\tau,2\pi i/N\right)=\frac{1}{12}+\frac{\zeta_{N}}{(1-\zeta_{N})^{2}}+\sum_{n\geq1}\left(\sum_{d|n}d(\zeta_{N}^{d}+\zeta_{N}^{-d}-2)\right)q^{n},\]
where $\zeta_{N}=\exp(2\pi i/N)$.
\end{proof}

The remaining steps in establishing the Theorem are then fairly standard:
\begin{proof}[Proof of the Theorem] Identifying the pair
$$\left(Z,\partial Z\right)=\left(D(\lambda_{\HH}')\oplus D(\lambda_{\HH}),S(\lambda_{\HH}')\oplus D(\lambda_{\HH})\cup D(\lambda_{\HH}')\oplus S(\lambda_{\HH})\right)$$
with the Thom space of the Whitney sum bundle $\lambda_{\HH}'\oplus\lambda_{\HH}$, we have:
\begin{equation*}
\begin{split}
{f}\left[S^{\oplus2}_{\HH}B\right]&\equiv\left\langle (Ell^{\Gamma}(\pi^{*}\lambda_{\HH}')-1)(Ell^{\Gamma}_{0}(\pi^{*}\lambda_{\HH})-1),[Z,\partial Z]\right\rangle\\
&=\left\langle \frac{Ell^{\Gamma}(\lambda_{\HH}')-1}{c_{2}(\lambda_{\HH}')}\frac{Ell^{\Gamma}_{0}(\lambda_{\HH})-1}{c_{2}(\lambda_{\HH})},[B]\right\rangle
\end{split}
\end{equation*}
Clearly, this expression vanishes if the dimension of the base is not divisible by four; therefore, in what follows, we put $\dim B=2m=4n>0$. Index theory then yields the  divisibility result (which is improvable for even $n$)
\begin{equation*}
\left\langle(-1)^{n}\frac{c_{2}^{n}\left(\lambda_{\HH}'\right)}{\left(2n\right)!/2},[B]\right\rangle=\left\langle\hat{A}(TB)\,ch\left(\lambda_{\HH}'\right),[B]\right\rangle\in\ZZ;
\end{equation*}
thus, combined with the congruence $d^{3}\equiv d^{5}\mod24$, the contribution to $f$ coming from $\omega^{n}=c_{2}^{n}(\lambda_{\HH}')$ is congruent to a modular form of top weight (i.e.\ of weight $2n+4$), hence vanishes. Furthermore, modulo the indeterminacy we have $0\equiv(g_{2}^{(N)}-\frac{1}{12})\frac{B_{2n+2}}{4n+4}(E_{2n+2}-1)\equiv-\frac{B_{2n+2}}{48(n+1)}(12g_{2}^{(N)}+E_{2n+2})$, so the previous argument also disposes of the contribution coming from $c_{2}^{n}(\lambda_{\HH})$.
\end{proof}

\begin{rmk*}
The absence of the `extremal' summands in \eqref{the formula} implies that the $f$--invariant of a double transfer on a base of positive dimension vanishes if the class of the base maps trivially along
\begin{equation*}
\Omega^{fr}_{*}\left(\HH P^{\infty}\times\HH P^{\infty}\right)\rightarrow\Omega^{fr}_{*}\left(\HH P^{\infty}\wedge\HH P^{\infty}\right)\rightarrow \Omega^{U}_{*}\left(\HH P^{\infty}\wedge\HH P^{\infty}\right);
\end{equation*}
subtracting products if necessary, we may also pass to the reduced versions.
\end{rmk*}

\begin{rmk*}
A computation along the lines of the preceding lemma shows
$$Td\left(\lambda_{\HH}\right)=1+\sum_{k=1}\left(-1\right)^{k+1}\frac{B_{2k}}{2k}\frac{c_2^k\left(\lambda_{\HH}\right)}{\left(2k-2\right)!},$$
hence an application of the Thom isomorphism to the disk bundle leads to the correct formula for the $e$--invariant of the single quaternionic transfer on a framed manifold of dimension $4n$, viz.\
\begin{equation}\label{corrected e}
e_{\CC}[S_{\HH}B]\equiv\frac{B_{2n+2}}{4n+4}\left\langle(-1)^{n}\frac{c_{2}^{n}\left(\lambda_{\HH}\right)}{\left(2n\right)!/2},[B]\right\rangle\mod\ZZ,
\end{equation}
the bracket yielding the $\lambda_{\HH}$--twisted Dirac index (which is even if $n$ is even).
\end{rmk*}

\subsection{Calculations in low dimensions}\label{calculations}

In principle, one can combine computations in the ring of divided congruences with divisibility results for combinations of Chern numbers (obtained from index theory) in order to simplify the formula \eqref{the formula} in the generic situation; as an illustration of this approach, we show: 

\begin{prop*}\label{low dimensional transfers} In the notation of the Theorem, let the manifold $B$ be of dimension $0<2m\leq14$. Then the $f$--invariant \textup{(}at the level $N=3$\textup{)} of the transfer $\left[S^{\oplus2}_{\HH}B\right]$ is trivial unless
\begin{itemize}
\item[\textup{(i)}] $\dim B=8$ and $\langle \eta\omega,[B]\rangle$ is odd,
\item[\textup{(ii)}] $\dim B=12$ and $4\nmid\langle \eta^{2}\omega,[B]\rangle$.
\end{itemize}
\end{prop*}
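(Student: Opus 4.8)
The plan is to specialize the master formula \eqref{the formula} to the relevant low-dimensional cases and then exploit the integrality of the $\lambda_{\HH}$--twisted Dirac indices to kill all but the asserted terms. Since the $f$--invariant vanishes unless $\dim B = 4n$, only the cases $\dim B \in \{4, 8, 12\}$ need attention; the cases $\dim B \in \{2,6,10,14\}$ (i.e.\ $m$ not of the form $2n$) are immediately trivial. For $\dim B = 4$ we have $n=1$, so the sum $\sum_{k=1}^{n-1}$ in \eqref{the formula} is empty and the invariant vanishes. This leaves the genuine computations for $n=2$ and $n=3$.

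For $\dim B = 8$ ($n=2$) the sum in \eqref{the formula} has a single term $k=1$, giving (up to sign) $\frac{B_4}{2}G_4(\tau)\langle \eta\omega,[B]\rangle/(2!\,2!)$, i.e.\ a fixed rational multiple of $G_4 \langle \eta\omega,[B]\rangle$. Here $G_4$ is a genuine modular form (of weight $4$, not of weight $0$ or top weight $2n+2 = 6$), so it contributes to the indeterminacy only through its integral multiples. The claim is that the class is trivial precisely when $\langle \eta\omega, [B]\rangle$ is even; this should follow by computing the exact coefficient $-B_4/(2\cdot (2!)^2) = -\tfrac{1}{24}\cdot\tfrac{1}{8}$ hmm — more carefully $B_4 = -1/30$, so the coefficient is $\tfrac{1}{60}\cdot\tfrac{1}{8}$ — and then reading off from the ring of divided congruences $\underline{\underline D}^{\Gamma_1(3)}_3$ (generated by $E_1, E_3$) which rational multiples of $G_4$ expand integrally after adding a constant. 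The upshot I expect: the relevant denominator is exactly $2$, so the invariant is nonzero iff $\langle\eta\omega,[B]\rangle$ is odd, as stated in (i). One should double-check that no further divisibility of $\langle\eta\omega,[B]\rangle$ itself is forced by index theory — unlike the ``extremal'' Chern numbers $\langle\eta^2,[B]\rangle$ and $\langle\omega^2,[B]\rangle$ which are twisted Dirac indices, the mixed number $\langle\eta\omega,[B]\rangle$ is a priori only an integer, so no a priori parity constraint exists and the condition ``$\langle\eta\omega,[B]\rangle$ odd'' is the genuine obstruction.

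For $\dim B = 12$ ($n=3$) the sum runs over $k=1,2$, but by the symmetry $k\leftrightarrow n-k$ of the summand (note $G_{2n-2k+2}$ and the binomial-type weights are interchanged, and $B_{2k+2}/(k+1)$ is not symmetric) the two terms are $\frac{B_4}{2}G_8\langle\eta\omega^2,\cdot\rangle/(2!\,4!)$ and $\frac{B_6}{3}G_6\langle\eta^2\omega,\cdot\rangle/(4!\,2!)$. Using the divisibility input from the Theorem's proof — namely $\langle(-1)^n c_2^n(\lambda_{\HH})/((2n)!/2),[B]\rangle \in \ZZ$ together with $d^3\equiv d^5 \bmod 24$ — one shows the $G_8$-term and also, crucially, the $G_6\langle\eta^2\omega,\cdot\rangle$ term reduce modulo $\underline{\underline D}^{\Gamma_1(3)}_4$ to something involving $\langle\eta^2\omega,[B]\rangle \bmod 4$; since the mixed triple number $\langle\eta^2\omega,[B]\rangle$ is again only an integer with no finer constraint, the invariant survives precisely when $4 \nmid \langle\eta^2\omega,[B]\rangle$, giving (ii). The main obstacle will be the bookkeeping in the divided-congruence ring: one must carefully identify which integral linear combinations of $\{1, E_1^2 = \text{wt }2, G_4, G_6, G_8\}$ and their products lie in $\ZZ^{\Gamma_1(3)}[\![q]\!]$, so as to pin down the exact denominators ($2$ in the $\dim 8$ case, $4$ in the $\dim 12$ case). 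This is the same style of computation as in \cite[Section 5.1]{Bodecker:2008pi}; the Kummer-type congruences for Bernoulli numbers and the explicit generators $E_1,E_3$ of $M_*^{\Gamma_1(3)}$ are the tools, and once the denominators are extracted the mixed Chern numbers $\langle\eta\omega,[B]\rangle$ and $\langle\eta^2\omega,[B]\rangle$ appear with no hidden divisibility, yielding exactly the stated criteria.
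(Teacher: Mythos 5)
Your overall skeleton is right and matches the paper's: only $\dim B=4n$ matters, $n=1$ gives an empty sum, and the substance is the determination of the exact order of the coefficients of the mixed Chern numbers inside the ring of divided congruences. The $n=2$ discussion is essentially correct in outline (the paper pins down the order by identifying $\frac{1}{240}\frac{E_4-1}{240}$ with the known $f$--invariant of $\beta_{4/4}=\alpha_{4/4}^2$, rather than by a from-scratch integrality analysis of multiples of $G_4$, but either route works); note, though, that you never actually carry out the computation you describe -- ``the upshot I expect'' is not a proof, and the hard direction (that the denominator really is $2$, resp.\ $4$, and not less) is exactly the content being outsourced to \cite{Bodecker:2008pi,Bodecker:2009fk} in the paper.

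The genuine gap is in the $\dim B=12$ case. First, a bookkeeping slip: for $n=3$ the two summands carry $G_6\langle\eta\omega^2\rangle$ and $G_4\langle\eta^2\omega\rangle$ (weights $2n-2k+2=6,4$), not $G_8$ and $G_6$. More seriously, the expression a priori depends on \emph{two} independent Chern numbers, $\langle\eta\omega^2,[B]\rangle$ and $\langle\eta^2\omega,[B]\rangle$, so no criterion phrased purely in terms of $\langle\eta^2\omega,[B]\rangle\bmod 4$ can be extracted without a relation between them. The paper supplies exactly this: the index-theoretic interpretation of $\langle ch(\lambda_{\HH}-2)\,ch(\lambda_{\HH}'-2),[B]\rangle$ forces $12\mid\langle\eta^2\omega+\eta\omega^2,[B]\rangle$, which is what allows the $\langle\eta\omega^2\rangle$--term to be traded for a $\langle\eta^2\omega\rangle$--term before the divided-congruence analysis begins. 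Your proposal not only omits this step but explicitly asserts its negation (``the mixed triple number is again only an integer with no finer constraint''); the individual numbers are indeed unconstrained, but their sum is not, and without that constraint the statement (ii) does not follow -- e.g.\ you cannot rule out a nontrivial contribution from $\langle\eta\omega^2\rangle$ when $4\mid\langle\eta^2\omega\rangle$. After that reduction there is still the (tedious) identification of the resulting coefficient with $f(\beta_{4/2,2})$, of order exactly $4$, which again you assert rather than prove.
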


\begin{proof}
According to the Theorem, it is sufficient to check the cases $m=2n$: If $n=1$ in \eqref{the formula}, the sum is empty; if $n=2$, the only contribution comes from the coefficient of $\langle\eta\omega,[B]\rangle$, viz.\ $\frac{1}{240}\frac{E_4-1}{240}\equiv\frac{1}{2}(\frac{E_{4}-1}{240})^{2}$ which has been identified with the $f$--invariant of $\beta_{4/4}=\alpha_{4/4}^2$ in \cite{Bodecker:2008pi} (cf.\ also \cite{Bodecker:2009fk}). The remaining case $n=3$ results in
\begin{equation*}
\begin{split}
f\left[S^{\oplus2}_{\HH}B\right]&\equiv\frac{1}{12}\left(\frac{1}{240}\frac{E_6-1}{504}\left\langle\eta\omega^2,[B]\right\rangle+\frac{1}{504}\frac{E_4-1}{240}\left\langle\eta^2\omega,[B]\right\rangle\right)\\
&\equiv\frac{1}{4}\left(\frac{E_6-1}{8}\frac{1}{16}-\frac{1}{8}\frac{E_4-1}{16}\right)\left\langle\eta^2\omega,[B]\right\rangle\mod\underline{\underline{D}}^{\Gamma_{1}(3)}_{10}
\end{split}
\end{equation*}
since the index interpretation of $\langle ch(\lambda_{\HH}-2)ch(\lambda_{\HH}'-2),[B]\rangle$ implies the divisibilty result $12|\langle\left(\eta^2\omega+\eta\omega^2\right),[B]\rangle$, and the congruence $d^{p}\equiv d\mod p$ allows the removal of the the primes $p>3$ from the denominators.

It remains to be shown that the coefficient of $\langle\eta^2\omega,[B]\rangle$ is  of order precisely four; this will follow immediately once we have established a stronger statement, viz.\ that it coincides with the image of $\beta_{4/2,2}$. To this end, we perform an admittedly tedious yet (hopefully) transparent calculation, first expressing the coefficient in terms of the Eisenstein series $E_{4}$:
\begin{equation*}
\begin{split}
&\frac{1}{4}\left(\frac{E_6-1}{8}\frac{1}{16}-\frac{1}{8}\frac{E_4-1}{16}\right)\\
\equiv&-\frac{1}{4}\frac{E_4-1}{16}\frac{E_6-1}{8}-\frac{1}{2}\left(\frac{1}{8}\frac{E_4-1}{16}\right)\\
\equiv&\frac{1}{4}\left(\frac{E_4-1}{16}\right)^2-\frac{1}{2}\left(\frac{1}{8}\frac{E_4-1}{16}\right)
\end{split}
\end{equation*}
In terms of $E_{1}^{\Gamma}$ and $E_{3}^{\Gamma}$, the first summand can be rewritten as follows
\begin{equation*}
\begin{split}
&\frac{1}{4}\left(\frac{E_4-1}{16}\right)^2\\
=&\frac{1}{4}\left[\frac{1}{4}\left(\frac{E_1^4-1}{8}\right)^2+\frac{1}{2}\frac{E_1^4-1}{8}\left(E_1^4-E_1E_3\right)+\frac{1}{4}\left(E_1^4-E_1E_3\right)^2\right]\\
=&\frac{1}{4}\left[\left(\frac{E_1^2-1}{4}\right)^4+\left(\frac{E_1^2-1}{4}\right)^3+\frac{1}{4}\left(\frac{E_1^2-1}{4}\right)^2+\right.\\
&+\left.\frac{1}{2}\frac{E_1^4-1}{8}\left(E_1^4-E_1E_3\right)+\frac{1}{4}\left(E_1^4-E_1E_3\right)^2\right]\\
\equiv&\frac{E_1^2}{4}\left[\left(\frac{E_1^2-1}{4}\right)^4+\left(\frac{E_1^2-1}{4}\right)^3+\frac{1}{4}\left(\frac{E_1^2-1}{4}\right)^2+\right.\\&+\left.\frac{1}{2}\frac{E_1^4-1}{8}\left(E_1^4-E_1E_3\right)+\frac{1}{4}\left(E_1^4-E_1E_3\right)^2\right]\\
\equiv&\frac{1}{4}\left(\frac{E_1^2-1}{4}\right)^4+\frac{1}{4}\left(\frac{E_1^2-1}{4}\right)^3+\frac{1}{16}E_1^2\left(\frac{E_1^2-1}{4}\right)^2-\frac{1}{32}E_1E_3\\
=&\frac{1}{4}\left(\frac{E_1^2-1}{4}\right)^4+\frac{1}{2}\left(\frac{E_1^2-1}{4}\right)^3+\frac{1}{16}\left(\frac{E_1^2-1}{4}\right)^2-\frac{1}{32}E_1E_3,
\end{split}
\end{equation*}
where the next-to-last step follows from:
\begin{equation*}
\begin{split}
\frac{1}{8}\frac{E_1^4-1}{8}\left(E_1^6-E_1^3E_3\right)&\equiv\frac{1}{64}\left(E_1^3E_3-1\right)-\frac{1}{16}\frac{E_1^6-1}{4}\\
&\equiv\frac{1}{64}\left(E_1^3E_3-1\right)+\frac{1}{4}\frac{E_4-1}{16}\\
&\equiv\frac{1}{64}\left(E_1^3E_3-1\right)\\
&\equiv-\frac{1}{4}\frac{E_4-1}{16}E_1^3E_3\equiv-\frac{1}{4}\frac{E_4-1}{16}E_1E_3\\
&\equiv\frac{1}{4}\frac{E_6-1}{8}E_1E_3\equiv-\frac{1}{32}E_1E_3
\end{split}
\end{equation*}
On the other hand, we have
\begin{equation*}
\begin{split}
\frac{1}{16}\frac{E_4-1}{16}=&\frac{1}{32}\left(\frac{E_1^4-1}{8}+E_1^4-E_1E_3\right)\\
\equiv&\frac{1}{32}\frac{E_1^4-1}{8}-\frac{1}{32}E_1E_3\\
\equiv&\frac{1}{32}\left(\frac{E_1^4-1}{8}-\frac{E_1^2-1}{4}\right)-\frac{1}{32}E_1E_3\\
=&\frac{1}{16}\left(\frac{E_{1}^{2}-1}{4}\right)^{2}-\frac{1}{32}E_{1}E_{3}.
\end{split}
\end{equation*}
Therefore, the coefficient of $\langle\eta^2\omega,[B]\rangle$ is congruent to $\frac{1}{4}(\frac{E_{1}^{2}-1}{4})^{4}+\frac{1}{2}(\frac{E_{1}^{2}-1}{4})^{3}$, which can be identified with $f(\beta_{4/2,2})$ by the results of \cite{Bodecker:2009fk}.
\end{proof}

\subsection{The examples}

Recall that the homogeneous space $Sp(n)/Sp(1)^{\times n}$ may be identified with the quaternionic flag manifold, i.e.\ the space of $n$--flags in $\HH^{n}$. In particular, it comes with $n$ tautological quaternionic lines $\lambda_{i}$, and the (total) Whitney sum of these lines is trivial. If the latter condition is expressed in terms of the Chern classes (and the structure of the fiber bundles $Sp(k)/Sp(1)^{\times k}\rightarrow\HH P^{k-1}$ is taken into account), one  recovers the well-known integral cohomology ring of the quaternionic flag manifold, viz.\
\begin{equation}\label{cohomology ring}
H^{*}\left(Sp(n)/Sp(1)^{\times n};\ZZ\right)\cong\ZZ\left[t_{1},\dots,t_{n}\right]/\left(S^{+}\left(t_{1},\dots,t_{n}\right)\right)
\end{equation}
where $t_{i}=c_{2}(\lambda_{i})$ and $S^{+}\left(t_{1},\dots,t_{n}\right)$ is the ideal generated by the symmetric polynomials (of positive degree) in these Chern classes. Furthermore,  by the results of \cite{Singhof:1986km},  the quaternionic flag manifolds are stably parallelizable, hence can be framed.

\begin{rmk*}
For definiteness, we may consider the stable tangential framing of $Sp(n)/Sp(1)^{\times n}$ that is induced by the identification
$$(n-1)\oplus Ad\left(Sp(n),Sp(1)^{\times n}\right)\cong Ad\left(SU(2n),Sp(n)\right)|_{Sp(1)^{\times n}},$$
where $Ad(G,H)$ denotes the isotropy representation $H\rightarrow\text{Aut}\left(T_{eH}G/H\right)$.
\end{rmk*}

\begin{proof}[Proof of Corollary \ref{non-trivial examples}] First of all, observe that the Theorem implies that the image of the double quaternionic transfer in the tenth stable stem has trivial $f$--invariant; actually, since $\pi^{s}_{10}S^{0}\cong\ZZ/3\oplus\ZZ/2$ is generated by $\beta_{1}^{(3)}$ and $\alpha_{1}\alpha_{5}$, the image of this transfer itself is necessarily trivial. Moreover, the element $\beta_{1}^{(3)}$ generates the only 3-primary information visible to the $f$--invariant in the range under consideration 
(see e.g.\ the tables in \cite{Ravenel:2004xh}), so all that remains to be checked is that the conditions obtained in Proposition \ref{low dimensional transfers} can be met: Clearly, for (i) we may take the cartesian product of two copies of the flag manifold $Sp(2)/Sp(1)^{\times2}\approx \mathbb{HP}^{1}$ equipped with the pullbacks of the respective tautological lines. Similarly, for (ii) we may take the flag manifold $Sp(3)/Sp(1)^{\times3}$ equipped with any pair of distinct tautological lines $\lambda_{i}$, $\lambda_{j}$; then by \eqref{cohomology ring} we know that, up to sign, $t_{i}t_{j}^{2}$ is dual to the fundamental class.
\end{proof}

\begin{proof}[Proof of Corollary \ref{vanishing result}] Working in the cohomology ring \eqref{cohomology ring} for a fixed $n$, the successive use of all the relations reveals that
\begin{equation*}
\begin{split}
t_{1}^{n}&=-t_{1}^{n-1}\left(t_{2}+\dots+t_{n}\right)=t_{1}^{n-2}\left(t_{2}t_{3}+\dots+t_{n-1}t_{n}\right)=\dots=\\
&=\left(-1\right)^{n-1}t_{1}t_{2}\dots t_{n}=0;
\end{split}
\end{equation*}
specializing to $n=4$, we also have $t_{1}^{3}t_{2}^{3}=t_{1}^{3}t_{2}\left(t_{1}t_{3}+t_{1}t_{4}+t_{3}t_{4}\right)=0$.
Therefore, if $n\geq4$, no non-trivial summands occur in the formula \eqref{the formula} for the $f$--invariant of the tautological transfer of the quaternionic flag manifolds. 
\end{proof}

\bibliography{refbib_edited}
\end{document}